\documentclass[journal]{article}

\usepackage{graphicx,url}
\usepackage{dcolumn}
\usepackage{bm}
\usepackage{amsmath,amsfonts,amssymb,citesort}

\newtheorem{definition}{\noindent{\it Definition}}[section]
\newtheorem{theorem}{\noindent{\it Theorem}}[section]

\newtheorem{remark}[theorem]{\noindent{\it Remark}}
\newtheorem{corollary}[theorem]{\noindent{\it Corollary}}
\newenvironment{proof}{\noindent{\it Proof:}}{$\hfill$ $\Box$\\ }
\newtheorem{example}{\noindent{\it Example}}[section]

\begin{document}

\title{Pseudo Magic Squares}

\author{Giuliano G. La Guardia and Ana Lucia Pereira Baccon
\thanks{The authors are with Department of Mathematics and Statistics,
State University of Ponta Grossa, 84030-900, Ponta Grossa - PR,
Brazil. Corresponding author: Giuliano G. La Guardia ({\tt \small
gguardia@uepg.br}). }}

\maketitle

\begin{abstract}
A magic square of order $n$ is an $n \times n$ square (matrix) whose
entries are distinct nonnegative integers such that the sum of the
numbers of any row and column is the same number, the magic
constant. In this paper we introduce the concept of pseudo magic
squares, \emph{i.e.}, magic squares defined over the ring of
integers, without the restriction of distinct numbers. Additionally,
we generalize this new concept by introducing a group (ring)
structure over it. This new approach can provide useful tools in
order to find new non-isomorphic pseudo magic squares.
\end{abstract}

\section{Introduction}

The concept of magic squares is well known in literature
\cite{Pickover:2002,Andrews:2001,Ahmed:2003}. The Loh-Shu magic
square
\begin{eqnarray*}
\left[
\begin{array}{ccc}
4 & 9 & 2\\
3 & 5 & 7\\
8 & 1 & 6\\
\end{array}
\right]
\end{eqnarray*}
is the oldest known magic square and its invention is attributed to
Fuh-Hi (2858-2738 b.C.) \cite{Pickover:2002}. There exist
interesting papers available in the literature dealing with
constructions of magic squares
\cite{Andrews:2001,Ahmed:2003,Cohen:2003,Ahmed:2004}.  Recently, Xin
\cite{Xin:2008} have constructed all magic squares of order three.
In most works available in literature, the approach adopted is to
apply combinatorial methods to compute more classes of new
(non-isomorphic) magic squares.

In this paper we introduce the concept of pseudo magic square (PMS).
We show that a PMS have a natural group structure. Additionally, we
generalize this new concept in order to obtain a generic magic
square (GMS), which is derived from a arbitrary group (ring). The
structure of a group (ring) induces a group (ring) structure in the
set of GMS's. Based on these facts, one can see that our approach is
quite different of the ones available in literature.

The paper is organized as follows. In Section~\ref{sec2}, we fix the
notation and also define the concept of pseudo magic squares. In
Section~\ref{sec3}, we present the contributions of this paper: some
properties of PMS are shown as well as the introduction of the
concept of generic magic squares with corresponding properties are
exhibited. In Section~\ref{sec4}, a brief summary of this paper is
given.

\section{Preliminaries}\label{sec2}

\emph{Notation}. Throughout this paper, ${\mathbb N}$ denotes the
set of nonnegative integers, ${\mathbb N}-\{0\} = {\mathbb N}^{*}$,
${\mathbb Z}$ is the set of integers. The set of square matrices of
order $n$ with entries in ${\mathbb Z}$ is denoted by ${\mathbb
M}_{n}({\mathbb Z})$; the cardinality of a set $S$ is denoted by
$\mid S\mid$.

\begin{definition}
Let $n \in {\mathbb N}^{*}$. A pseudo magic square of order $n$,
denoted by ${A}_{n}^{\Box}$, is an element of ${\mathbb
M}_{n}({\mathbb Z})$ such that the sum of the numbers of any row and
column is the same number $c_{{A}_{n}^{\Box}}$, the pseudo magic
constant (constant, for short). The set of PMS of order $n$ is
denoted by ${\mathcal P}_{n}^{\Box}$.
\end{definition}
\begin{remark}
To avoid stress of notation, we do not distinguish between an $n
\times n$ matrix or an $n \times n$ square.
\end{remark}
\begin{example}
As an example, we have a PMS of order $4$ given by
\begin{eqnarray*}
\left[
\begin{array}{cccc}
-5 & -5 & -5 & -5\\
-5 & -5 & -5 & -5\\
-5 & -5 & -5 & -5\\
-5 & -5 & -5 & -5\\
\end{array}
\right].
\end{eqnarray*}
\end{example}

\section{The Results}\label{sec3}
In this section we present the results of this work. We start by
showing that the set of PMS endowed with the sum of matrices is a
group:
\begin{theorem}\label{Teo1}
The ordered pair $({\mathcal P}_{n}^{\Box}, +)$ is an abelian group,
where the operation $+$ means the addition of matrices.
\end{theorem}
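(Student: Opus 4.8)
The plan is to lean on the fact that $({\mathbb M}_{n}({\mathbb Z}), +)$ is itself a (well-known) abelian group, so that it suffices to verify that ${\mathcal P}_{n}^{\Box}$ is a subgroup of it; associativity and commutativity of the addition are then inherited without further work. I would invoke the one-step subgroup test, reducing everything to two claims: that ${\mathcal P}_{n}^{\Box}$ is nonempty, and that $A - B \in {\mathcal P}_{n}^{\Box}$ whenever $A, B \in {\mathcal P}_{n}^{\Box}$.

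Nonemptiness is immediate, since the zero matrix has every row sum and every column sum equal to $0$ and is therefore a PMS with constant $0$ (the constant square of the Example works just as well). The substantive step is closure. Writing $A = (a_{ij})$ and $B = (b_{ij})$ for two elements of ${\mathcal P}_{n}^{\Box}$ with constants $c_{A}$ and $c_{B}$, I would compute, for each fixed row index $i$,
\[
\sum_{j=1}^{n}(a_{ij} - b_{ij}) = \sum_{j=1}^{n} a_{ij} - \sum_{j=1}^{n} b_{ij} = c_{A} - c_{B},
\]
and symmetrically for each column. The point to make carefully is that this value $c_{A} - c_{B}$ does not depend on which row (or column) is chosen, so all $2n$ line-sums of $A - B$ coincide; hence $A - B \in {\mathcal P}_{n}^{\Box}$ with constant $c_{A-B} = c_{A} - c_{B}$. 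Specializing, taking $A$ equal to the zero matrix shows $-B \in {\mathcal P}_{n}^{\Box}$, so the set is closed under additive inverses, and the zero matrix serves as the identity.

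With nonemptiness and closure under subtraction established, the one-step test yields that ${\mathcal P}_{n}^{\Box}$ is a subgroup of the abelian group $({\mathbb M}_{n}({\mathbb Z}), +)$, and is therefore itself an abelian group. There is no genuine obstacle here: the only thing demanding attention is the bookkeeping that the common line-sum of the difference is the \emph{same} integer across all rows and columns simultaneously, which is precisely the content of the displayed computation; everything else (associativity, commutativity, and the role of the zero matrix as identity) is simply inherited from the ambient matrix group.
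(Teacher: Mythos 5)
Your proof is correct, but it runs in the opposite logical direction from the paper's. The paper verifies the group axioms for $({\mathcal P}_{n}^{\Box}, +)$ directly: closure under addition (the sum of two PMS's with constants $c_{{A}_{n}^{\Box}}$ and $c_{{B}_{n}^{\Box}}$ is a PMS with constant $c_{{A}_{n}^{\Box}}+c_{{B}_{n}^{\Box}}$), the null matrix as identity, associativity and commutativity inherited entrywise from $({\mathbb Z},+)$, and the explicit inverse ${[-A]}_{n}^{\Box}$ with constant $-c_{{A}_{n}^{\Box}}$; only afterwards does it record, as Corollary~\ref{Cor1}, that ${\mathcal P}_{n}^{\Box}$ is a subgroup of $({\mathbb M}_{n}({\mathbb Z}),+)$. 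You invert this order: you establish the subgroup property first, via the one-step subgroup test (nonemptiness plus closure under subtraction, with the line-sum computation giving $c_{A-B}=c_{A}-c_{B}$ uniformly over all rows and columns), and then let the abelian group structure be inherited wholesale from the ambient group. What your route buys is economy: a single closure-under-subtraction computation replaces the separate verifications of closure and inverses, associativity and commutativity need no argument beyond citing the ambient group, and the paper's Corollary~\ref{Cor1} falls out as a byproduct rather than as a separate statement. What the paper's route buys is self-containment and explicitness: it does not presuppose that $({\mathbb M}_{n}({\mathbb Z}),+)$ is an abelian group, and it exhibits the identity, the inverses, and their constants concretely. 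Both arguments rest on the same elementary observation --- that row and column sums behave additively --- so the difference is one of organization rather than substance, and your version is, if anything, the tighter of the two.
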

\begin{proof}
If ${A}_{n}^{\Box}$ and ${B}_{n}^{\Box}$ are PMS of constants
$c_{{A}_{n}^{\Box}}$ and $c_{{B}_{n}^{\Box}}$, respectively, then
${A}_{n}^{\Box}+{B}_{n}^{\Box}$ is a PMS with constant
$c_{{A}_{n}^{\Box}}+ c_{{B}_{n}^{\Box}}$, so ${\mathcal
P}_{n}^{\Box}$ is closed. It is clear that the null PMS of order $n$
is the identity element. The associativity follows trivially of the
associativity of the (additive) group $({\mathbb Z}, +)$. The same
applies to commutativity. It is clear that the inverse of a PMS
${A}_{n}^{\Box}$ of constant $c_{{A}_{n}^{\Box}}$ is the PMS
${[-A]}_{n}^{\Box}$ of constant $-c_{{A}_{n}^{\Box}}$.
\end{proof}

\begin{corollary}\label{Cor1}
$({\mathcal P}_{n}^{\Box}, +)$ is a subgroup of $({\mathbb
M}_{n}({\mathbb Z}), +)$.
\end{corollary}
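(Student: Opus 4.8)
The plan is to invoke the standard subgroup criterion, leaning almost entirely on the facts already secured in Theorem~\ref{Teo1}. First I would record the inclusion ${\mathcal P}_{n}^{\Box} \subseteq {\mathbb M}_{n}({\mathbb Z})$, which is immediate: by the defining condition of a pseudo magic square, every ${A}_{n}^{\Box}$ is by hypothesis an element of ${\mathbb M}_{n}({\mathbb Z})$, so ${\mathcal P}_{n}^{\Box}$ is a subset of the ambient group. The subset is moreover nonempty, since the null matrix of order $n$ is a PMS (of constant $0$).

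Next I would check that the two group operations agree, namely that the operation making $({\mathcal P}_{n}^{\Box}, +)$ a group in Theorem~\ref{Teo1} is precisely the restriction to ${\mathcal P}_{n}^{\Box}$ of the addition of $({\mathbb M}_{n}({\mathbb Z}), +)$. Both are ordinary matrix addition, so this is settled by inspection; it is the only point that requires any attention at all.

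Finally I would apply the one-step subgroup test. Given ${A}_{n}^{\Box}, {B}_{n}^{\Box} \in {\mathcal P}_{n}^{\Box}$, Theorem~\ref{Teo1} guarantees that ${[-B]}_{n}^{\Box}$ is again a PMS (of constant $-c_{{B}_{n}^{\Box}}$) and that the sum of two PMS is a PMS; hence ${A}_{n}^{\Box} + {[-B]}_{n}^{\Box} = {A}_{n}^{\Box} - {B}_{n}^{\Box}$ lies in ${\mathcal P}_{n}^{\Box}$, with constant $c_{{A}_{n}^{\Box}} - c_{{B}_{n}^{\Box}}$. By the subgroup criterion, $({\mathcal P}_{n}^{\Box}, +)$ is a subgroup of $({\mathbb M}_{n}({\mathbb Z}), +)$.

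I do not anticipate a genuine obstacle here. Since Theorem~\ref{Teo1} already establishes that ${\mathcal P}_{n}^{\Box}$ is a group under the very operation inherited from ${\mathbb M}_{n}({\mathbb Z})$, the corollary reduces to recording the inclusion of underlying sets and confirming that the operations coincide. The argument is therefore entirely bookkeeping rather than a new computation, and the closure-under-subtraction step, which is the crux of the subgroup test, is handled verbatim by the closure and inverse assertions of the preceding theorem.
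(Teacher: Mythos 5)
Your proof is correct and is exactly the argument the paper leaves implicit: the paper's proof of Corollary~\ref{Cor1} is just the word ``Straightforward,'' relying on Theorem~\ref{Teo1} in the way you spell out. Your write-up simply makes explicit the set inclusion, the agreement of the operations, and the closure-under-subtraction step of the subgroup criterion, all of which follow from the closure and inverse assertions already established in Theorem~\ref{Teo1}.
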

\begin{proof}
Straightforward.
\end{proof}

It is possible to construct new PMS's from old ones. In fact, the
first method to construct new PMS's is by means of the direct sum
structure (see Theorem~\ref{Teo2}). It is clear that several
structures can be defined over PMS's as well, but such structures
will be considered in other paper.

\begin{theorem}\label{Teo2}
Let ${A}_{n}^{\Box}$ and ${B}_{n}^{\Box}$ be two PMS's. Define the
direct sum ${A}_{n}^{\Box}\oplus {B}_{n}^{\Box}$ as
\begin{eqnarray*}
\left[
\begin{array}{cc}
{A}_{n}^{\Box} & {B}_{n}^{\Box}\\
{B}_{n}^{\Box} & {A}_{n}^{\Box}\\
\end{array}
\right] \in {\mathbb M}_{2n}({\mathbb Z}).
\end{eqnarray*}
Then the set ${\mathcal P}_{n}^{\Box}\oplus{\mathcal P}_{n}^{\Box}$
of direct sums endowed with addition of matrices is an abelian
group.
\end{theorem}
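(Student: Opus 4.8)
The plan is to realize the collection of direct sums as the image of an injective additive map out of the product group ${\mathcal P}_{n}^{\Box}\times{\mathcal P}_{n}^{\Box}$, so that the abelian group structure is inherited rather than checked axiom by axiom. This avoids any real computation beyond identifying the correct ambient group.

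First I would observe that each direct sum is genuinely a pseudo magic square of order $2n$, so that the set in question sits inside ${\mathbb M}_{2n}({\mathbb Z})$. Writing $c_{A}$ and $c_{B}$ for the constants of ${A}_{n}^{\Box}$ and ${B}_{n}^{\Box}$, a direct inspection of the block structure shows that every row of ${A}_{n}^{\Box}\oplus{B}_{n}^{\Box}$ is (a row of one block) concatenated with (the corresponding row of the other block), and likewise for columns; hence every row sum and every column sum equals $c_{A}+c_{B}$. Thus the direct sum lies in ${\mathcal P}_{2n}^{\Box}\subseteq{\mathbb M}_{2n}({\mathbb Z})$.

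Next I would introduce the map $\phi\colon{\mathcal P}_{n}^{\Box}\times{\mathcal P}_{n}^{\Box}\to{\mathbb M}_{2n}({\mathbb Z})$ sending $({A}_{n}^{\Box},{B}_{n}^{\Box})$ to the direct sum ${A}_{n}^{\Box}\oplus{B}_{n}^{\Box}$. Since the block entries add componentwise, $\phi$ is additive, that is $\phi(A_{1},B_{1})+\phi(A_{2},B_{2})=\phi(A_{1}+A_{2},\,B_{1}+B_{2})$, and the right-hand side again lands in the image precisely because $A_{1}+A_{2}$ and $B_{1}+B_{2}$ are again PMS's by Theorem~\ref{Teo1}. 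The map $\phi$ is clearly injective, as the diagonal block recovers ${A}_{n}^{\Box}$ and the off-diagonal block recovers ${B}_{n}^{\Box}$. Therefore $\phi$ is an injective homomorphism out of the abelian group $({\mathcal P}_{n}^{\Box}\times{\mathcal P}_{n}^{\Box},+)$, and its image ${\mathcal P}_{n}^{\Box}\oplus{\mathcal P}_{n}^{\Box}$ is a subgroup of ${\mathbb M}_{2n}({\mathbb Z})$ isomorphic to that product; in particular it is an abelian group.

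If a direct verification is preferred, the subgroup criterion suffices: closure follows from componentwise addition together with Theorem~\ref{Teo1}, the identity is the direct sum of two null PMS's, and the inverse of ${A}_{n}^{\Box}\oplus{B}_{n}^{\Box}$ is $[-A]_{n}^{\Box}\oplus[-B]_{n}^{\Box}$, while associativity and commutativity are inherited from $({\mathbb M}_{2n}({\mathbb Z}),+)$. I do not anticipate a genuine obstacle here; the only point requiring care is closure, which is exactly where Theorem~\ref{Teo1} enters, guaranteeing that the diagonal and off-diagonal blocks of a sum of direct sums are themselves PMS's so that the sum is again of the prescribed block form.
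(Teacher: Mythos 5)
Your proposal is correct, and its main line of argument is genuinely different from the paper's. The paper proceeds by direct verification: it checks that each direct sum ${A}_{n}^{\Box}\oplus {B}_{n}^{\Box}$ is a PMS of order $2n$ with constant $c_{{A}_{n}^{\Box}}+c_{{B}_{n}^{\Box}}$, then checks that the sum of two direct sums is again a PMS of order $2n$ (closure), and defers identity, associativity, commutativity and inverses to the argument of Theorem~\ref{Teo1}. You instead transport the structure: you exhibit ${\mathcal P}_{n}^{\Box}\oplus{\mathcal P}_{n}^{\Box}$ as the image of the injective additive map $\phi$ defined on the product group ${\mathcal P}_{n}^{\Box}\times{\mathcal P}_{n}^{\Box}$, so that being an abelian group is inherited rather than verified axiom by axiom. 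Your route buys more than the paper's: it yields the isomorphism ${\mathcal P}_{n}^{\Box}\oplus{\mathcal P}_{n}^{\Box}\cong {\mathcal P}_{n}^{\Box}\times{\mathcal P}_{n}^{\Box}$, and it gives Corollary~\ref{Cor2} (the subgroup statement) for free, whereas the paper states that separately. It also quietly repairs a small imprecision in the paper's closure step: what closure actually requires is that $[{A}_{n}^{\Box}\oplus {B}_{n}^{\Box}]+[{C}_{n}^{\Box}\oplus {D}_{n}^{\Box}]$ is again a \emph{direct sum}, i.e.\ of the block form $({A}_{n}^{\Box}+{C}_{n}^{\Box})\oplus({B}_{n}^{\Box}+{D}_{n}^{\Box})$ with both blocks PMS's, not merely that it is a PMS of order $2n$; your identity $\phi(A_{1},B_{1})+\phi(A_{2},B_{2})=\phi(A_{1}+A_{2},B_{1}+B_{2})$ together with Theorem~\ref{Teo1} makes exactly this point explicit. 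The paper's approach, in turn, is more elementary and self-contained, requiring no notion of homomorphism or image. Your fallback paragraph (subgroup criterion) essentially reproduces the paper's argument, so you have both routes covered.
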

\begin{proof}
The direct sum is well-defined. Let ${A}_{n}^{\Box}$ and
${B}_{n}^{\Box}$ be two PMS's of constants $c_{{A}_{n}^{\Box}}$ and
$c_{{B}_{n}^{\Box}}$, respectively. Then ${A}_{n}^{\Box}\oplus
{B}_{n}^{\Box}$ is a PMS of order $2n$ and constant
$c_{{A}_{n}^{\Box}}+ c_{{B}_{n}^{\Box}}$. If ${C}_{n}^{\Box}$ and
${D}_{n}^{\Box}$ are also PMS's of constants $c_{{C}_{n}^{\Box}}$
and $c_{{D}_{n}^{\Box}}$, respectively, then its sum
$[{A}_{n}^{\Box}\oplus {B}_{n}^{\Box}]+[{C}_{n}^{\Box}\oplus
{D}_{n}^{\Box}]$ is also a PMS of order $2n$ and constant
$c_{{A}_{n}^{\Box}}+c_{{B}_{n}^{\Box}}+c_{{C}_{n}^{\Box}}+c_{{D}_{n}^{\Box}}$.
Then ${\mathcal P}_{n}^{\Box}\oplus{\mathcal P}_{n}^{\Box}$ is
closed. The remaining properties follow similarly as in the proof of
Theorem~\ref{Teo1}.
\end{proof}

\begin{corollary}\label{Cor2}
$({\mathcal P}_{n}^{\Box}\oplus{\mathcal P}_{n}^{\Box}, +)$ is a
subgroup of $({\mathbb M}_{2n}({\mathbb Z}), +)$.
\end{corollary}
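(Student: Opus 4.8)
The plan is to apply the one-step subgroup criterion to the subset ${\mathcal P}_{n}^{\Box}\oplus{\mathcal P}_{n}^{\Box} \subseteq {\mathbb M}_{2n}({\mathbb Z})$. First I would observe that every direct sum ${A}_{n}^{\Box}\oplus {B}_{n}^{\Box}$ is by construction a $2n \times 2n$ matrix with integer entries, so the set is genuinely a subset of ${\mathbb M}_{2n}({\mathbb Z})$, and the operation under which Theorem~\ref{Teo2} makes it an abelian group is precisely the restriction of the ambient matrix addition. The set is nonempty, since the direct sum of the null PMS of order $n$ with itself is the zero matrix of ${\mathbb M}_{2n}({\mathbb Z})$, which clearly lies in the set.

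The key step is closure under differences. Given two elements ${A}_{n}^{\Box}\oplus {B}_{n}^{\Box}$ and ${C}_{n}^{\Box}\oplus {D}_{n}^{\Box}$, I would compute their difference blockwise:
\begin{eqnarray*}
\left[
\begin{array}{cc}
{A}_{n}^{\Box} & {B}_{n}^{\Box}\\
{B}_{n}^{\Box} & {A}_{n}^{\Box}\\
\end{array}
\right]
-
\left[
\begin{array}{cc}
{C}_{n}^{\Box} & {D}_{n}^{\Box}\\
{D}_{n}^{\Box} & {C}_{n}^{\Box}\\
\end{array}
\right]
=
\left[
\begin{array}{cc}
{[A-C]}_{n}^{\Box} & {[B-D]}_{n}^{\Box}\\
{[B-D]}_{n}^{\Box} & {[A-C]}_{n}^{\Box}\\
\end{array}
\right].
\end{eqnarray*}
Because matrix subtraction acts on each block independently, the right-hand side retains the direct-sum block pattern. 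By Theorem~\ref{Teo1}, $({\mathcal P}_{n}^{\Box}, +)$ is a group, so the differences ${[A-C]}_{n}^{\Box}$ and ${[B-D]}_{n}^{\Box}$ are again PMS's of order $n$. Hence the displayed difference equals ${[A-C]}_{n}^{\Box}\oplus{[B-D]}_{n}^{\Box}$, which is an element of ${\mathcal P}_{n}^{\Box}\oplus{\mathcal P}_{n}^{\Box}$.

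Having verified nonemptiness together with closure under differences, the subgroup criterion lets me conclude that $({\mathcal P}_{n}^{\Box}\oplus{\mathcal P}_{n}^{\Box}, +)$ is a subgroup of $({\mathbb M}_{2n}({\mathbb Z}), +)$. I do not expect any genuine obstacle here; the single point requiring a moment's care is checking that subtracting two direct sums preserves the block form, so that the outcome is again a direct sum of PMS's rather than merely some matrix of ${\mathbb M}_{2n}({\mathbb Z})$ with equal row and column sums. Alternatively, one could bypass the criterion altogether and simply invoke Theorem~\ref{Teo2}, which already supplies the abelian group structure under the restricted operation; the subgroup claim then follows at once from the subset inclusion noted in the first paragraph.
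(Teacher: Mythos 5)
Your proposal is correct: the paper's own proof is just the single word ``Straightforward,'' and what you have written is precisely the routine verification that word is gesturing at (nonemptiness plus closure under differences, with Theorem~\ref{Teo1} guaranteeing that blockwise differences of PMS's are PMS's). Either of your two routes --- the one-step subgroup criterion, or noting that Theorem~\ref{Teo2} already gives a group structure under the restricted ambient operation --- completes the argument, so there is nothing to correct.
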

\begin{proof}
Straightforward.
\end{proof}

Let ${A}_{n}^{\Box}$ be a PMS of order $n$. We can multiply each
entry of ${A}_{n}^{\Box}$ by an integer $k$. Then the resulting
square is also a PMS square. Similarly, by adding an integer to each
entry we obtain again a PMS. Moreover, one can define the tensor
product of PMS's. Note that such ideas can be explored in order to
generate several properties as well as new structures for PMS's.

Next, we introduce the concept of generic magic squares:
\begin{definition}
Let $(G, \star)$ be an abelian group. A generic magic square, of
order $n$, derived from $(G, \star)$, is an $n\times n$ matrix with
entries in $G$ such that for any row and any column, the result of
the operation among the row(column) elements corresponds to the same
element, the generic magic constant (constant, for short).
\end{definition}
We denote the set of GMS's of order $n$ over $G$ by $[{\mathcal
Gg}]_{G_{n}}^{\Box}$.

\begin{remark}
It is clear that $[{\mathcal Gg}]_{G_{n}}^{\Box}\neq \emptyset$. In
fact, if $e_{G}$ is the identity of $G$, then
$[E]_{G_{n}}^{\Box}=\left[
\begin{array}{cccc}
e_{G} & e_{G} & \cdots & e_{G}\\
e_{G} & e_{G} & \cdots & e_{G}\\
\vdots & \vdots & \cdots & \vdots\\
e_{G} & e_{G} & \cdots & e_{G}\\
\end{array}
\right] \in {\mathbb M}_{n}(G)$ is a GMS over $G$.
\end{remark}

\begin{theorem}\label{Teo3}
The ordered pair $([{\mathcal Gg}]_{G_{n}}^{\Box}, {\star}_{p})$,
where ${\star}_{p}$ denotes the group operation component-wise in
${\mathbb M}_{n}(G)$, is an abelian group.
\end{theorem}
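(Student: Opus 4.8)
The plan is to verify the group axioms for $([{\mathcal Gg}]_{G_{n}}^{\Box}, {\star}_{p})$ by reducing each one to the corresponding axiom in the underlying abelian group $(G, \star)$, exactly as was done for $({\mathcal P}_{n}^{\Box}, +)$ in Theorem~\ref{Teo1}. The essential observation is that ${\star}_{p}$ acts entry-by-entry, so every property we need follows from the same property in $G$ applied coordinate-wise, with one genuine point requiring care: closure, i.e.\ that the component-wise product of two GMS's is again a GMS.

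First I would establish closure. Let $[A]_{G_{n}}^{\Box}$ and $[B]_{G_{n}}^{\Box}$ be GMS's with generic magic constants $c_{A}$ and $c_{B}$ respectively, and write $[C]_{G_{n}}^{\Box} = [A]_{G_{n}}^{\Box} {\star}_{p} [B]_{G_{n}}^{\Box}$, so $c_{ij} = a_{ij} \star b_{ij}$. Fixing any row $i$, the operation applied along that row of $C$ gives $\big(a_{i1} \star b_{i1}\big) \star \cdots \star \big(a_{in} \star b_{in}\big)$. Because $(G,\star)$ is abelian, I may freely rearrange and regroup the factors to collect all the $a$-terms and all the $b$-terms, obtaining $\big(a_{i1}\star\cdots\star a_{in}\big)\star\big(b_{i1}\star\cdots\star b_{in}\big) = c_{A}\star c_{B}$. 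The identical computation along any column yields the same value $c_{A}\star c_{B}$, so $[C]_{G_{n}}^{\Box}$ is a GMS with constant $c_{A}\star c_{B}$, proving closure.

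The remaining axioms are routine. Associativity and commutativity of ${\star}_{p}$ hold because they hold entrywise in $G$. The identity element is the square $[E]_{G_{n}}^{\Box}$ all of whose entries equal $e_{G}$, which is a GMS by the preceding remark and which satisfies $[A]_{G_{n}}^{\Box}{\star}_{p}[E]_{G_{n}}^{\Box} = [A]_{G_{n}}^{\Box}$ coordinate-wise. For inverses, given $[A]_{G_{n}}^{\Box}$ with constant $c_{A}$, I would take the square whose $(i,j)$ entry is $a_{ij}^{-1}$ (the inverse in $G$); the closure argument above shows this inverse matrix is itself a GMS, with constant $c_{A}^{-1}$, and its component-wise product with $[A]_{G_{n}}^{\Box}$ is $[E]_{G_{n}}^{\Box}$.

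The only step carrying real content is closure, and even there the abelian hypothesis on $G$ does the work: it is precisely commutativity that licenses the regrouping of the interleaved $a$- and $b$-terms into a product of two separate row(column) sums. Without it one could not guarantee that the interleaved product collapses to $c_{A}\star c_{B}$, so I would highlight that this is the place where the abelian assumption is used, while every other axiom transfers mechanically from $G$ to the component-wise structure.
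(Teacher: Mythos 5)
Your proof is correct and takes essentially the same approach the paper sketches: component-wise verification of the group axioms, with the identity $[E]_{G_{n}}^{\Box}$ and everything else reduced to the corresponding axiom in $(G,\star)$; indeed you supply exactly the details (the closure computation via rearrangement, which is where commutativity of $G$ is genuinely needed) that the paper declares ``clear'' and defers to a future paper. One small wording fix: the GMS property of the entrywise-inverse matrix does not follow from closure itself but from the same rearrangement argument applied to $a_{i1}^{-1}\star\cdots\star a_{in}^{-1}=(a_{i1}\star\cdots\star a_{in})^{-1}=c_{A}^{-1}$, which is a one-line repair rather than a gap.
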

\begin{proof}
It is clear that the set is closed. The identity is the GMS
$[E]_{G_{n}}^{\Box}$. The remaining properties follows similarly as
in the proof of Theorem~\ref{Teo1}. The detailed proof will be
provided in other paper.
\end{proof}
By lack of space, we will not investigate in this paper the
structures of this interesting group, \emph{i.e.}, homomorphisms and
isomorphisms, group actions, properties. However, these subjects of
research will be investigated in a future paper.

Consider now a commutative ring with unit $(R, +, \cdot)$. Similarly
as was done with the group structure, the ring $(R, +, \cdot)$
induces a natural ring structure in the set os GMS as follows:

\begin{definition}
Let $(R, +, \cdot)$ be an commutative ring with unit. A generic
magic square of order $n$, derived from $(R, +, \cdot)$, is an
$n\times n$ matrix with entries in $R$ such that:\\
(1) For any row and any column, the result of the operation $+$
among the row(column) elements corresponds to the same element, the generic
additive magic constant (additive constant, for short);\\
(2) For any row and any column, the result of the operation $\cdot$
among the row(column) elements corresponds to the same element, the
generic multiplicative magic constant (multiplicative constant, for
short).
\end{definition}
We denote by $[{\mathcal Gr}]_{R_{n}}^{\Box}$ the set of GMS's of
order $n$ over $R$. Note that the additive constant can be different
from the multiplicative constant. Analogously to groups, the set
$[{\mathcal Gr}]_{R_{n}}^{\Box}$ is not empty.

Theorem~\ref{Teo4}, given in the following, states that the set
$[{\mathcal Gr}]_{R_{n}}^{\Box}$ endowed with two component-wise
operations form a commutative ring with unit. We give only a sketch
of the proof.

\begin{theorem}\label{Teo4}
The ordered triple $([{\mathcal Gr}]_{R_{n}}^{\Box}, {+}_{p},
{\cdot}_{p})$, where ${+}_{p}$ and ${\cdot}_{p}$ denote the ring
operations component-wise in ${\mathbb M}_{n}(R)$, is a commutative
ring with unit.
\end{theorem}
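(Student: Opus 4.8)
The plan is to verify the commutative ring axioms by reducing each of them to the corresponding axioms in the underlying ring $(R, +, \cdot)$, exploiting the fact that both operations ${+}_{p}$ and ${\cdot}_{p}$ act component-wise on ${\mathbb M}_{n}(R)$. First I would establish closure under each operation separately. For ${+}_{p}$, closure follows exactly as in Theorem~\ref{Teo4}'s additive part: if two GMS's have additive constants $c_1$ and $c_2$, their component-wise sum has every row and column summing to $c_1 + c_2$, so it is again a GMS. For ${\cdot}_{p}$, the same argument applies to the multiplicative structure: if two GMS's have multiplicative constants $d_1$ and $d_2$, then in the component-wise product the product of the entries along any row or column equals $d_1 \cdot d_2$ (here commutativity of $R$ is what lets the per-entry products be regrouped into the product of the two row-products), so closure under ${\cdot}_{p}$ holds.

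Next I would treat the additive group structure. Since $([{\mathcal Gg}]_{R_{n}}^{\Box}, {+}_{p})$ is just the additive abelian group of GMS's over the additive group $(R,+)$, Theorem~\ref{Teo3} already gives that $([{\mathcal Gr}]_{R_{n}}^{\Box}, {+}_{p})$ is an abelian group, with identity $[E]_{R_{n}}^{\Box}$ (the all-zeros square) and additive inverses obtained entrywise. I would then verify the multiplicative semigroup axioms. Associativity of ${\cdot}_{p}$ and commutativity of ${\cdot}_{p}$ descend component-wise from the associativity and commutativity of $\cdot$ in $R$. For the multiplicative identity, the natural candidate is the all-ones square $[U]_{R_{n}}^{\Box}$ whose every entry is $1_R$; I would check that it is genuinely a GMS (its multiplicative constant is $1_R$ and its additive constant is $n \cdot 1_R$) and that ${\cdot}_{p}$-multiplying any GMS by it returns the original, again purely entrywise.

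Finally I would verify the distributive laws, which connect the two operations: for GMS's $X, Y, Z$, both $X {\cdot}_{p} (Y {+}_{p} Z)$ and $(X {\cdot}_{p} Y) {+}_{p} (X {\cdot}_{p} Z)$ are computed entrywise, so equality reduces to the distributive law $a(b+c) = ab + ac$ holding in $R$ at each of the $n^2$ positions. Commutativity of $R$ makes the left distributive law suffice. I would conclude by remarking that all these verifications are the component-wise lifts of the ring axioms of $R$, so the structure transfers wholesale once closure under both operations is secured.

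The main obstacle, and really the only non-formal point, is closure under ${\cdot}_{p}$: one must confirm that the component-wise product of two GMS's is again a GMS, i.e. that multiplying corresponding entries preserves the property that every row-product and column-product is a single fixed element of $R$. This is where commutativity of $R$ is essential, since the row-product of the product square is a reordering of the interleaved factors from the two original squares, and only commutativity guarantees it factors as the product of the two original row-constants. Once this is in place, every remaining axiom is an entrywise consequence of the ring axioms of $R$ and involves no further difficulty.
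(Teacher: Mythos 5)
There is a genuine gap in your proposal, and it sits exactly at the point you single out as the crux. By the paper's definition, a GMS over a ring must satisfy \emph{both} condition (1) (all row and column sums equal) and condition (2) (all row and column products equal) simultaneously. Your closure argument for ${+}_{p}$ verifies only that $A {+}_{p} B$ inherits condition (1), and your closure argument for ${\cdot}_{p}$ verifies only that $A {\cdot}_{p} B$ inherits condition (2); in each case the other condition is left unchecked, and it can genuinely fail. Over $R = {\mathbb Z}$, take the two circulant squares
\[
A = \left[\begin{array}{ccc} 1 & 2 & 3\\ 3 & 1 & 2\\ 2 & 3 & 1 \end{array}\right],
\qquad
B = \left[\begin{array}{ccc} 1 & 2 & 3\\ 2 & 3 & 1\\ 3 & 1 & 2 \end{array}\right],
\]
each of which is a GMS (every row and column has sum $6$ and product $6$). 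Then
\[
A {+}_{p} B = \left[\begin{array}{ccc} 2 & 4 & 6\\ 5 & 4 & 3\\ 5 & 4 & 3 \end{array}\right]
\qquad\text{and}\qquad
A {\cdot}_{p} B = \left[\begin{array}{ccc} 1 & 4 & 9\\ 6 & 3 & 2\\ 6 & 3 & 2 \end{array}\right].
\]
The sum has row products $48, 60, 60$, so it violates condition (2); the product has row sums $14, 11, 11$, so it violates condition (1). Hence $[{\mathcal Gr}]_{R_{n}}^{\Box}$ is closed under \emph{neither} operation, and no completion of your argument is possible: the statement itself is false already for $n = 3$ over ${\mathbb Z}$.

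The purely entrywise transfers in your proposal (associativity, commutativity, distributivity, the all-zeros square as zero, the all-ones square as unit) are correct as far as they go, and your observation that commutativity of $R$ makes row products multiplicative is exactly right --- it shows ${\cdot}_{p}$ preserves condition (2), just as ${+}_{p}$ preserves condition (1). The failure is in the crossed pair of requirements: addition destroys multiplicative constancy, and multiplication destroys additive constancy. For what it is worth, the paper's own proof is only a sketch asserting that closure is ``easy to see,'' so it glosses over the identical defect; your more explicit write-up is precisely what exposes it. A true statement in this vicinity would require either weakening the definition so that only one operation's constancy is demanded (but then only that one operation survives, giving the abelian group of Theorem~\ref{Teo3} rather than a ring), or restricting to a subset genuinely closed under both operations, such as the constant squares, which form a ring isomorphic to $R$.
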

\begin{proof}
It is easy to see that the set $[{\mathcal Gr}]_{R_{n}}^{\Box}$ is
closed under the operations ${+}_{p}$ and ${\cdot}_{p}$. The
commutativity, associativity and distributivity of such operations
follow from the commutativity, associativity and distributivity,
respectively, of the operations $+$ and $\cdot$ of the ring. The
identity and the unit of $[{\mathcal Gr}]_{R_{n}}^{\Box}$ are
obvious from the context.
\end{proof}

Given a GMS $([{\mathcal Gr}]_{R_{n}}^{\Box}, {+}_{p},
{\cdot}_{p})$, we can operate (the operation considered here is
$\cdot$ of the ring) a fixed element of the ring $r \in R$ with each
entry in $[{\mathcal Gr}]_{R_{n}}^{\Box}$. It is easy to see that
the resulting matrix is also a GMS. In this context, one can derive
more algebraic structures over $([{\mathcal Gr}]_{R_{n}}^{\Box},
{+}_{p}, {\cdot}_{p})$.

\begin{definition}
A matroid $M$ is an ordered pair $(S, \mathcal{I})$ consisting of a
finite set $S$ and a collection $\mathcal{I}$ of
subsets of $S$ satisfying the following three conditions:\\
(I.1) $\emptyset \in \mathcal{I}$;\\
(I.2) If $I \in \mathcal{I}$ and ${I}' \subset I$, then ${I}' \in
\mathcal{I}$;\\
(I.3) If ${I}_1$, ${I}_2$ $\in \mathcal{I}$ and $\mid {I}_1\mid <
\mid {I}_2\mid$, then there exists an element $e \in I_{2} - I_1$
such that $I_1 \cup \{e\} \in \mathcal{I}$.
\end{definition}

It is well known that the concept of matroid generalizes several
algebraic structures. In this context, we describe here, how to
induce a matroid structure on the set $[{\mathcal
Gr}]_{R_{n}}^{\Box}$. In particular, if we consider the class of
vectorial matroid (see \cite{Whitney:1935}), interesting results can
be proved.

\section{Summary}\label{sec4}
In this paper, we have introduced the concept of pseudo magic
squares and generic magic squares. Moreover, we have shown that the
set of generic (pseudo) magic squares has a group (ring) structure.
Additionally, some properties of PMS and GMS as well as
constructions of new PMS's from old ones have also been presented.
From the content exhibited in this paper, it can be observed that
there exist much work with respect to our new approach to be done.
Other interesting possibility is the introduction of a topological
structure in GMS's.

\section*{Acknowledgment}
This research was partially supported by the Brazilian Agencies
CAPES and CNPq.

\end{document}